\documentclass[12pt,a4paper]{article}
\usepackage{a4wide}
\usepackage{amsmath,amsthm,amssymb}
\usepackage{array,enumerate}
\usepackage{indentfirst}	%paragraph
\usepackage{url,authblk,color,cite}	%address, author and color

\usepackage{hyperref}
\usepackage{xurl}

\newtheorem{theorem}{Theorem}[section]

\newtheorem{lemma}[theorem]{Lemma}

\newtheorem*{claim}{\textbf{Claim}}

\usepackage{tikz}
\usetikzlibrary{shapes}

\title{On the disjunctive domination numbers of the torus grid graphs}

\author[a]{Zhi Qiao}
\author[b]{Zheng-Jiang Xia 
	{\thanks{Corresponding author. \protect \\
	\indent \ \ 
	E-mail: 
    zhiqiao@sicnu.edu.cn (Z. Qiao),
    xzj@mail.ustc.edu.cn (Z.-J. Xia), zmhong@mail.ustc.edu.cn (Z.-M. Hong) }}}
\author[c]{Zhen-Mu Hong}
\affil[a]{ School of Mathematical Sciences, Sichuan Normal University, Chengdu, 610068, China}
\affil[b]{ School of Finance, Anhui University of Finance \& Economics, Bengbu, 233030, China}
\affil[c]{School of Statistics and Applied Mathematics, Anhui University of Finance \& Economics, Bengbu 233030, China}

\date{}

\begin{document}
\maketitle

\begin{abstract}
	Let $\Gamma=(V,E)$ be a graph. 
	The disjunctive domination number of $\Gamma$ is the minimum cardinality of a set $S\subseteq V$ such that every vertex not in $S$ is adjacent to a vertex of $S$, or has at least two vertices in $S$ at distance $2$ from it. 
	In this paper, we give bounds for the disjunctive domination numbers of the torus grid graphs $C_m\Box C_n$, and determine the disjunctive domination numbers of  $C_3\Box C_n$, $C_4\Box C_{n}$ and $C_8\Box C_{4n}$. 
	
	{\bf Keywords}: Disjunctive domination, Torus grid graphs
\end{abstract}

\section{Introduction}
	Let $\Gamma=(V(\Gamma),E(\Gamma))$ be a simple graph. 
	The distance between $u,v\in V(\Gamma)$ in the graph $\Gamma$ is denoted by $d_\Gamma(u,v)$. We use $d(u,v)$ if the graph is clear. 
	We set $\Gamma_i(v)=\{u \in V(\Gamma) \mid d(u,v)=i\}$, with $\Gamma(v):=\Gamma_1(v)$. 
	The {\em  Cartesian product} $G \Box H$ of two graphs $G$ and $H$ is a graph with vertex set $V(G) \times V(H)$, where two vertices $(g,h),(g',h')\in V(G\Box H)$ are adjacent if and only if either $g=g'$ and $hh' \in E(H)$, or $h=h'$ and $gg' \in E(G)$. The graphs $C_m\Box C_n$ are called {\em torus grid graphs}. 
	
	Goddard et al. \cite{G14} introduced the concept of disjunctive dominating set in a graph. 
	A set $S$ of $V(\Gamma)$ is a called a {\em disjunctive dominating set} (abbreviated 2DD-set) if for every vertex $v \in V(\Gamma) \setminus S$, either $|\Gamma(v)\cap S|\geq 1$, or $|\Gamma_2(v)\cap S|\geq 2$. 
	The minimum cardinality of a 2DD-set in $\Gamma$ is called the {\em disjunctive domination number}, denoted by $\gamma_2^d(\Gamma)$. 
	Since its introduction, it has been studied in \cite{H15,H16,H23,J20,P18,Y26,Y26a,Z22,Z23}. 
	
	The disjunctive domination numbers are determined for $P_i\Box P_n$ and $P_i\Box C_n$ with $i=2,3$, see \cite{G14,Y26a,Y26}. 
	In this paper, we give bounds for the disjunctive domination numbers of the torus grid graphs $C_m\Box C_n$, and determine the disjunctive domination numbers of $C_3\Box C_n$, $C_4\Box C_n$ and $C_8\Box C_{4n}$. 
	
	\begin{theorem}\label{thm:bound}
		For the torus grid graph $C_m \Box C_n$ ($m,n \ge 5$), the disjunctive domination number satisfies
		$$\frac{mn}{9}\leq \gamma_2^d(C_m \Box C_n)\leq 2\left\lceil \frac{m}{4}\right\rceil \cdot \left\lceil \frac{n}{4}\right\rceil.$$
	\end{theorem}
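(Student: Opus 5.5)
The lower bound follows from a counting (discharging) argument, and the upper bound from an explicit periodic construction with period $4$ in both coordinates. I write the vertices of $C_m\Box C_n$ as pairs $(i,j)$ with $i\in\mathbb{Z}_m$, $j\in\mathbb{Z}_n$.

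\emph{Lower bound.} Let $S$ be a 2DD-set. To each vertex $v$ I assign the weight
$$w(v)=[v\in S]+|\Gamma(v)\cap S|+\tfrac12|\Gamma_2(v)\cap S|.$$
By the definition of a 2DD-set, $w(v)\ge 1$ for every $v$: a vertex of $S$ gets $1$ from the first term, a vertex dominated by a neighbour gets at least $1$ from the second, and otherwise it has two $S$-vertices at distance $2$. Double counting then gives
$$mn\le\sum_v w(v)=\sum_{s\in S}\left(1+|\Gamma(s)|+\tfrac12|\Gamma_2(s)|\right).$$
Since $m,n\ge5$, the torus is locally the square grid up to distance $2$. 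Hence $|\Gamma(s)|=4$, and the vertices $(\pm2,0)$, $(0,\pm2)$, $(\pm1,\pm1)$ relative to $s$ are $8$ distinct vertices. This is exactly where $m,n\ge5$ is used, since for $m=4$ the vertices $(2,0)$ and $(-2,0)$ coincide. Each $s\in S$ therefore contributes $1+4+4=9$, so $mn\le 9|S|$.

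\emph{Upper bound.} I take
$$S=\{(i,j): i\equiv j\equiv 0 \pmod 4\}\cup\{(i,j): i\equiv j\equiv 2\pmod 4\},$$
with $0\le i\le m-1$ and $0\le j\le n-1$. The number of $i\equiv0 \pmod 4$ in this range is $\lceil m/4\rceil$, and the number of $i\equiv2\pmod 4$ is $\lceil (m-2)/4\rceil\le\lceil m/4\rceil$; the same holds for $j$. Hence $|S|\le 2\lceil m/4\rceil\lceil n/4\rceil$.

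When $4\mid m$ and $4\mid n$, it suffices to check the $16$ residue classes in the infinite grid.
\begin{itemize}
\item The classes $(0,\pm1)$, $(\pm1,0)$, $(2,2\pm1)$ and $(2\pm1,2)$ are adjacent to an $S$-vertex.
\item $(1,1)$ and $(3,3)$ have $(0,0)$, $(2,2)$ and $(4,4)$ among their vertices at distance $2$.
\item $(1,3)$ has $(0,4)$ and $(2,2)$ at distance $2$, and $(3,1)$ is symmetric.
\item $(2,0)$ has $(0,0)$ and $(4,0)$ at distance $2$, and $(0,2)$ is symmetric.
\end{itemize}
So every vertex outside $S$ is dominated, and $S$ is a 2DD-set.

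The main obstacle is the seam when $m$ or $n$ is not divisible by $4$. Along the wrap-around, the residue pattern of the row indices reads $\dots,0,1,2,3,0,\dots$ up to index $m-1$, followed by $0$. If $m\equiv1\pmod 4$, rows $m-1$ and $0$ are both $\equiv0$, which is locally denser and should be harmless. If $m\equiv3\pmod 4$, the pattern near the seam is $0,1,2,0$, which is also denser. The delicate case is $m\equiv2\pmod4$, where the pattern near the seam is $0,1,0$: rows $m-1$ and $m-2$ lose their $2$-row partner. There I would verify each vertex in the two or three seam rows and columns directly, using that $\lceil (m-2)/4\rceil$ may be strictly smaller than $\lceil m/4\rceil$ and so leaves slack. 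If some vertex, say one in a row $\equiv1$ near the seam, turns out to be undominated, I would spend this slack by placing the second family differently in the last block, e.g.\ shifting it to rows $\equiv 3$, so that each seam vertex regains a neighbour in $S$ or two $S$-vertices at distance $2$, without exceeding the count $2\lceil m/4\rceil\lceil n/4\rceil$. The same treatment applies to the columns. The corner, where both seams meet, is checked last as a finite case analysis.
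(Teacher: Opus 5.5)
Your lower bound matches the paper's double-counting argument. Your set $S$ is exactly the paper's $S'=\{(x,y): x,y\text{ even},\ x+y\equiv 0\pmod 4\}$, so the case $4\mid m$, $4\mid n$ is fine.

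\textbf{The gap.} For $m$ or $n$ not divisible by $4$ you give only a plan (``should be harmless'', ``I would verify'', ``if some vertex turns out to be undominated''). In fact your set is \emph{not} a 2DD-set when $m\equiv 2\pmod 4$. Row $m-1$ has residue $1$, its neighbouring rows $m-2$ and $0$ both have residue $0$, and rows $m-3$ and $1$ have residues $3$ and $1$. So no row of the second family lies within distance $2$ of row $m-1$. The first-family vertices in rows $m-2$ and $0$ sit in columns $\equiv 0\pmod 4$, so they are at distance $3$ from $(m-1,2)$. Hence none of the $12$ vertices at distance $1$ or $2$ from $(m-1,2)$ lies in $S$. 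Concretely, in $C_6\Box C_8$ your set is $\{(0,0),(0,4),(4,0),(4,4),(2,2),(2,6)\}$, and $(5,2)$ is not covered. The remedy you sketch is neither specified nor checked, so the upper bound is proved only when $4\mid m$ and $4\mid n$.

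\textbf{How the paper avoids this.} It never analyses seams; it uses Lemma~\ref{lm:reduce}. For $m,n\ge 6$, folding the columns $T_{n-2},T_{n-1}$ onto one column never increases distances and decreases them by at most $1$. Because $n\ge 6$, two vertices of $S$ at distance $2$ from a common vertex are never merged. So the image of a 2DD-set of $C_m\Box C_n$ is a 2DD-set of $C_m\Box C_{n-1}$ of no larger size. Iterating this from $C_{4\lceil m/4\rceil}\Box C_{4\lceil n/4\rceil}$, where the periodic set works, gives the bound for $m,n\ge 6$. The remaining case $m=5$ (or $n=5$) is handled by the separate set $\{(i,j): i\equiv 2j\pmod 5\}$, of size $n\le 4\lceil n/4\rceil$.

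\textbf{A direct repair.} If you prefer to keep a direct construction, you must spend the slack by \emph{adding} a row, not just moving one. When $m\equiv 2\pmod 4$, also put the second family on row $m-1$; this is allowed since $\lceil (m-2)/4\rceil+1=\lceil m/4\rceil$. Do the same on column $n-1$ when $n\equiv 2\pmod 4$. Then every odd-residue row has one neighbouring row from each family, which is what the classes with an odd coordinate need. The mixed classes $(0,2)$ and $(2,0)$ remain covered as well. This seam-and-corner verification still has to be written out, not asserted.
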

		
	\begin{theorem}\label{thm:3}
		For the torus grid graph $C_3 \Box C_n$ ($n \ge 3$), the disjunctive domination number is
		$$\gamma_2^d(C_3 \Box C_n) = \left\lceil \frac{n}{2} \right\rceil.$$
	\end{theorem}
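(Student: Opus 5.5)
The plan is to prove the upper bound with an explicit construction and the lower bound with a column-by-column discharging argument. Write the vertices of $C_3\Box C_n$ as $(i,j)$ with $i\in\mathbb{Z}_3$ (row) and $j\in\mathbb{Z}_n$ (column). Each column induces a triangle, and $(i,j)$ is adjacent to $(i,j\pm1)$. The vertices at distance $2$ from $(i,j)$ are $(i',j\pm1)$ with $i'\neq i$, together with $(i,j\pm2)$.

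\emph{Upper bound.} Take $S=\{(0,j): j \text{ even},\ 0\le j\le n-1\}$, which has $\lceil n/2\rceil$ elements.
\begin{itemize}
\item Every vertex in an even column lies in a triangle containing an element of $S$, so it is in $S$ or adjacent to $S$.
\item For an odd column $j$, the vertex $(0,j)$ is adjacent to $(0,j-1)\in S$.
\item The vertices $(1,j)$ and $(2,j)$ lie at distance $2$ from both $(0,j-1)$ and $(0,j+1)$. Since $j$ is odd, $j\neq n-1$ when $n$ is odd, so both $j-1$ and $j+1$ are even in $\{0,\dots,n-1\}$ and both vertices lie in $S$. They are distinct because $n\ge3$.
\end{itemize}
Hence $S$ is a 2DD-set and $\gamma_2^d(C_3\Box C_n)\le\lceil n/2\rceil$.

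\emph{Lower bound.} Let $S$ be a 2DD-set and set $s_j=|S\cap\text{column } j|$. It suffices to show $\sum_j s_j\ge n/2$, since the integer $|S|$ is then at least $\lceil n/2\rceil$. I will use discharging. Each vertex of $S$ starts with charge $1$. A nonempty column keeps charge $1/2$ and passes its surplus $s_j-1/2$ to the empty columns in the adjacent maximal run(s) of empty columns. The goal is to show that every column ends with charge at least $1/2$. This reduces to a local analysis of a maximal run of empty columns $j,\dots,j+k-1$ bordered by nonempty columns $j-1$ and $j+k$.

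\begin{itemize}
\item \emph{Runs with $k\ge3$.} Any middle column $c$ of the run has no neighbours in $S$. Its distance-2 candidates in columns $c\pm1$ are empty, so for each row $i$ both $(i,c-2)$ and $(i,c+2)$ must lie in $S$. For $k=3$ this forces $s_{j-1}=s_{j+3}=3$, which gives ample surplus. For $k\ge4$ it forces a column inside the run to be nonempty, which is impossible.
\item \emph{Runs with $k=1$.} Each bordering column has surplus at least $1/2$, so surplus from one side suffices. To avoid double counting I will split each column's surplus evenly between its two sides. A column with $s=1$ then sends $1/4$ each way, so I must check that $s_{j-1}=s_{j+1}=1$ is consistent. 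Indeed, two singletons $(a,j-1)$ and $(b,j+1)$ dominate all of column $j$: when $a=b$ the two other rows see both at distance $2$, and when $a\neq b$ the third row sees both at distance $2$.
\item \emph{Runs with $k=2$.} Suppose columns $j,j+1$ are empty. A vertex $(i,j)$ is dominated only via $(i,j-1)\in S$, or via two $S$-vertices among $(i',j-1)$ with $i'\ne i$, $(i,j-2)$ and $(i,j+2)$. Symmetrically, $(i,j+1)$ is dominated only via $(i,j+2)$, or via two among $(i',j+2)$, $(i,j-1)$ and $(i,j+3)$. I will show that these six constraints force $s_{j-1}+s_{j+2}\ge3$, or else force extra vertices in columns $j-2$ or $j+3$ whose surplus can be routed here. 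For instance, if $s_{j-1}=s_{j+2}=1$, then at most two of the six vertices in columns $j,j+1$ are adjacent to $S$. Each remaining vertex needs an $S$-vertex in column $j-2$ or $j+3$ in its own row.
\end{itemize}

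The main obstacle is the $k=2$ case. The difficulty is making the charge transfer consistent when the required vertices sit two columns away and may themselves be shared with another run. If the even split of surplus breaks down, I would first switch to a weighted rule: each $S$-vertex sends charge to the empty columns at distance $\le2$ that it helps dominate, and I would verify by a finite case check on a window of $6$ columns that every column receives at least $1/2$.
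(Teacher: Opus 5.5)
Your upper bound is the paper's construction and is fine. The lower bound, however, is not proved. The case $k=2$ (two consecutive empty columns) is where the whole difficulty lies, and you leave it as a plan.

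Worse, the rule you commit to fails there. Under the even split, an empty pair flanked by two singleton columns receives only $\frac{1}{4}+\frac{1}{4}=\frac{1}{2}$ but needs $1$. This happens in extremal configurations. In $C_3\Box C_{12}$ the set $S=\{(0,0),(2,1),(1,4),(0,5),(2,8),(1,9)\}$ is a 2DD-set of size $6=n/2$ with column counts $1,1,0,0$ repeated, so every empty pair is short by $\frac{1}{2}$. The missing charge is exactly the half of each flanking singleton's surplus that your rule sends toward its nonempty neighbour, where it is wasted. Your fallback, a ``weighted rule plus a finite check on six columns'', is never specified. So nothing is verified at the crux, and the double-counting worry you raise is left unresolved.

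The missing step is a structural fact you nearly reach. Suppose $s_{j-1}=s_{j+2}=1$, with the $S$-vertices in rows $a$ and $b$. Then the six constraints force columns $j-2$ and $j+3$ to be nonempty:
\begin{itemize}
\item If $a=b$, the two other rows of column $j$ need $(i,j-2)\in S$, and those of column $j+1$ need $(i,j+3)\in S$.
\item If $a\neq b$, the third row $c$ needs $(c,j-2)\in S$ and $(c,j+3)\in S$.
\end{itemize}
So in this case the flanking singletons border no other empty run. Now let each nonempty column divide its surplus only among its \emph{empty} neighbouring runs, sending all of it to one side when the other neighbour is nonempty. Then:
\begin{itemize}
\item a $k=2$ run receives at least $\frac{1}{4}+\frac{3}{4}$ if some flank has at least two vertices, and $\frac{1}{2}+\frac{1}{2}$ if both flanks are singletons;
\item a $k=1$ run receives at least $\frac{1}{2}$;
\item a $k=3$ run receives at least $\frac{5}{2}$.
\end{itemize}
With $n=3,4$ checked by hand, this would give a complete discharging proof, different from the paper's.

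The paper instead sends each nonempty column's surplus to the run of zeros on its right, which forms the block $X_i$. It uses the counting inequality $x_{j-2}+4x_{j-1}+8x_j+4x_{j+1}+x_{j+2}\geq 6$ to show that the only deficient block is $(1,0,0)$, with $V_i=-1$. The same inequality shows the next block then begins with a column having $x\geq 2$, so $V_{i+1}\geq 1$, and the paper pairs these two blocks.
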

	
	\begin{theorem}\label{thm:4}
		For the torus grid graph $C_4 \Box C_n$ ($n \ge 4$), the disjunctive domination number is
		$$
		\gamma_2^d(C_4 \Box C_n) =\left\{
		\begin{aligned}
			 & \frac{n}{2} + 1,                      &  & n \equiv 2 \pmod 4, \\
			 & \left\lceil \frac{n}{2} \right\rceil, &  & \text{otherwise}.
		\end{aligned}\right.
		$$
	\end{theorem}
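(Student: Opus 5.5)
The proof has two parts: explicit 2DD-sets for the upper bound, and a column-counting argument for the lower bound. Write the vertices of $C_4\Box C_n$ as $(i,j)$ with $i\in\mathbb{Z}_4$ and $j\in\mathbb{Z}_n$, and call $\{(i,j): i\in\mathbb{Z}_4\}$ column $j$. Let $c_j=|S\cap \text{column } j|$.

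\textbf{Upper bound.} For $n\equiv 0 \pmod 4$, I take
\[
S=\{(0,4k),\,(2,4k+2) : 0\le k<n/4\}.
\]
This has $n/2$ vertices, and a short check shows it is a 2DD-set.
\begin{itemize}
\item In columns $4k$ and $4k+2$, the two neighbours of the chosen vertex in its column are dominated. The antipodal vertex, e.g. $(2,4k)$, has $(2,4k\pm2)\in S$ at distance $2$.
\item In the odd columns, $(0,4k+1)$ and $(2,4k+1)$ are adjacent to $S$. The vertices $(1,4k+1)$ and $(3,4k+1)$ are each at distance $2$ from both $(0,4k)$ and $(2,4k+2)$.
\end{itemize}
For the other residues I cut this periodic pattern and patch the seam. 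For $n$ odd I add one extra vertex near the seam and check the few columns there. For $n\equiv 2\pmod 4$ the pattern cannot close up consistently (the row index would have to shift by $2$ around the cycle), so I add one extra vertex. This gives $\lceil n/2\rceil$ and $n/2+1$ respectively. These are finite local checks.

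\textbf{Lower bound $\lceil n/2\rceil$.} I prove the local statement
\[
c_{j}+c_{j+1}\ge 1 \quad\text{for all } j,
\]
or failing that, an averaged version such as $c_{j-1}+2c_j+2c_{j+1}+c_{j+2}\ge 3$. Summing over $j$ then gives $|S|\ge n/2$.

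For the main case, suppose $c_j=c_{j+1}=0$. Every vertex $(i,j)$ is then not dominated within its own column, so it is either adjacent to $(i,j-1)\in S$ or has two vertices of $S$ among its distance-$2$ vertices. Since columns $j$ and $j+1$ are empty, those candidates are $(i\pm1,j-1)$ and $(i,j\pm2)$. Applying the same reasoning to column $j+1$ (using columns $j+2$, $j+3$, $j-1$), I would show that covering all eight vertices of the two empty columns forces at least four vertices of $S$ in columns $j-1$ and $j+2$ (plus a little from $j\pm3$). These extra vertices compensate in a discharging scheme in which each empty column receives charge $\tfrac12$ from neighbouring columns. The case where $c_{j-1}$ or $c_{j+2}$ is also small leads to a chain argument, and I will organise it by the length of maximal runs of empty columns. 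Runs of length $\ge 3$ are impossible, because the middle column has no element of $S$ within distance $2$ except possibly in its own row two columns away, and that gives at most two candidates, which cannot serve all four vertices.

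\textbf{Case $n\equiv 2\pmod 4$.} Here I must exclude $|S|=n/2$. Equality forces every inequality in the discharging to be tight. I expect this to pin down the structure: $c_j$ alternates $1,0,1,0,\dots$, and consecutive chosen vertices in columns $j$ and $j+2$ lie in antipodal rows. The antipodality is needed because otherwise some vertex of column $j+1$ gets only one distance-$2$ element of $S$. The row index therefore shifts by $2$ every two columns, so going around $C_n$ requires $n/2$ shifts of $2$ to sum to $0 \bmod 4$, i.e. $n\equiv 0\pmod 4$. This contradicts $n\equiv 2\pmod 4$.

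The main obstacle is making the local lower-bound inequality and the equality characterisation airtight, since columns with $c_j\ge2$ can subsidise neighbours in irregular ways. I would first try the simple pairing $c_j+c_{j+1}\ge1$. If that fails, I would use a weighted window and enumerate configurations on five consecutive columns, which is finite but tedious.
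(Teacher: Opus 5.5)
\textbf{Gap in the lower bound.} Your upper-bound pattern matches the paper's. So does your closing row-shift argument for $n\equiv 2\pmod 4$. (In the paper, for odd $n$ the truncated set $\{(0,4k),(2,4k+2)\}\cap V(\Gamma)$ already has $\lceil n/2\rceil$ vertices and needs no patch; for $n\equiv 2\pmod 4$ it adds $(2,0)$.) The problem is the lower bound. You never establish a valid local inequality, and the ones you propose are false. So are the structural facts you planned to organise the case analysis around. The set
\[
S=\{(1,0),(2,0),(3,0),(0,1),(2,4),(0,5),(1,5),(3,5)\}
\]
is a 2DD-set of $C_4\Box C_6$; check the sixteen vertices outside $S$ directly. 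Its column counts are $3,1,0,0,1,3$.
\begin{itemize}
\item $c_2=c_3=0$, so $c_j+c_{j+1}\ge 1$ fails.
\item $c_1+2c_2+2c_3+c_4=2<3$, so your averaged inequality fails.
\item The columns flanking the empty pair carry only two vertices of $S$, not four. The deficit is paid by columns $0$ and $5$, two steps away.
\end{itemize}
Runs of three empty columns are also possible. Each vertex $(i,j)$ of the middle column has its own two candidates $(i,j\pm 2)$. Making columns $0$ and $4$ full in $C_4\Box C_8$ gives a 2DD-set with sequence $4,0,0,0,4,0,0,0$; Lemma~\ref{lm:q} only excludes runs of length $4$. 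So a discharging scheme where empty columns draw charge only from adjacent columns cannot work. Your equality analysis (``every inequality is tight'') then has no concrete inequality to act on.

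\textbf{Missing idea.} Use an unweighted window of four consecutive columns: $x_\ell+x_{\ell+1}+x_{\ell+2}+x_{\ell+3}\ge 2$ for all $\ell$.
\begin{itemize}
\item Four empty columns are impossible. A vertex $(i,\ell+1)$ would have no neighbour in $S$, and only $(i,\ell-1)$ could be an element of $S$ at distance $2$.
\item Suppose the window contains exactly one vertex, which by symmetry is $(0,y)$ with $y\in\{0,1\}$. Then $(2,2)$ has no neighbour in $S$ and at most one element of $S$ at distance $2$, namely $(2,4)$.
\end{itemize}
Summing over $\ell$ gives $4|S|\ge 2n$. This absorbs the irregular subsidies from heavy columns that you were worried about.

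The equality case then follows directly. If $|S|=n/2$, every window sum equals $2$, so $x_\ell=x_{\ell+4}$. Since $\gcd(4,n)=2$ for $n\equiv 2\pmod 4$, this gives $x_\ell=x_{\ell+2}$, so the sequence is $1,0,1,0,\ldots$. Only at that point does your antipodal-row parity argument apply and finish the proof.
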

	
	\begin{theorem}\label{thm:8}
		For the torus grid graph $C_8 \Box C_{n}$ $(n\geq 8)$, the disjunctive domination number satisfies
		$$
		n\leq \gamma_2^d(C_8 \Box C_{n})\leq \left\{
		\begin{aligned}
			& n+1, &  & n\equiv 1 \text{ or } 3 \pmod 4,\\
            & n+2,&  & n \equiv 2 \pmod 4. 
		\end{aligned}\right.
		$$ 
		If $n\equiv 0\pmod 4$, then $\gamma_2^d(C_8 \Box C_{n}) =n$. 
	\end{theorem}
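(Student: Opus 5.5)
The upper bounds mostly come from Theorem~\ref{thm:bound}, while the lower bound $n$ needs a sharper counting argument than the $mn/9$ bound of that theorem.

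\textbf{Upper bounds.} With $m=8$, Theorem~\ref{thm:bound} gives $\gamma_2^d(C_8\Box C_n)\le 4\lceil n/4\rceil$. This equals $n$ for $n\equiv 0\pmod 4$, $n+2$ for $n\equiv 2$, and $n+1$ for $n\equiv 3\pmod 4$. So the only new construction is for $n=4k+1$, where that bound gives $n+3$ and we need $n+1$. I would proceed as follows.
\begin{enumerate}
\item Take the periodic pattern behind Theorem~\ref{thm:bound}: $8$ vertices in every block of $4$ consecutive $C_8$-columns, i.e.\ $2$ per column on average.
\item Lay $k$ copies of this block on columns $0,\dots,4k-1$ and insert one extra column.
\item Place $2$ vertices in the extra column, giving $4k+2=n+1$ vertices in total.
\item Choose the rows of those $2$ vertices, possibly together with a shift of the pattern on one side of the seam, so that the vertices near the seam are again adjacent to $S$ or have two $S$-vertices at distance $2$.
\end{enumerate}
Because the pattern is periodic, only the columns within distance $2$ of the seam change. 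The verification is therefore a finite check of a few columns, and I would do it by hand with a figure.

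\textbf{Lower bound.} I would use a charging argument adapted to the columns. Each $s\in S$ gives charge $1$ to every vertex of its closed neighbourhood and charge $\tfrac12$ to every vertex at distance $2$. Then every vertex receives charge at least $1$, and each $s$ emits total charge $9$; this is exactly the argument for $mn/9$. To reach $8n$ total demand with only $n$ vertices, I must show that on average each $S$-vertex wastes at least one unit of charge. Waste means over-covering, or giving half-charge to a vertex that receives only one half-charge and is therefore not actually dominated through distance $2$.

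The plan is to group the charge by columns. A vertex of $S$ sends charge $4$ to its own column: itself and its two column-neighbours fully, and the two column vertices at distance $2$ by halves. It sends charge $2$ to each adjacent column and $\tfrac12$ to each column at distance $2$. Since a column is a cycle of length $8$, I would then do a case analysis on $s_j=|S\cap\text{column } j|$.
\begin{itemize}
\item If $s_j\ge 2$, the $C_8$ geometry forces overlap inside column $j$ itself. For example, two $S$-vertices in $C_8$ have overlapping closed or second neighbourhoods unless they are antipodal, and even then the half-charges double up.
\item If $s_j\le 1$, column $j$ must draw roughly $4$ or more units from columns $j\pm1$ and $j\pm2$. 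This forces those columns to be dense, and hence wasteful in turn.
\end{itemize}
The goal is a local inequality, for instance $\sum_{i=j-2}^{j+2}c_i s_i\ge \sum_i c_i$ for suitable weights $c_i$. Summing it cyclically over $j$ gives $|S|\ge n$.

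\textbf{Expected obstacle.} Finding the right local inequality and verifying it by exhausting the column configurations is the step I expect to be hardest. I would first try the unweighted inequality $s_{j-1}+s_j+s_{j+1}+s_{j+2}\ge 4$ for every $j$, since the extremal construction has exactly $8$ vertices in every $4$ consecutive columns. If that fails, I would fall back to a fractional discharging rule that moves surplus from dense columns to sparse neighbouring ones.

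Combining the lower bound $n$ with the $n\equiv 0\pmod 4$ construction gives the equality $\gamma_2^d(C_8\Box C_n)=n$ in that case.
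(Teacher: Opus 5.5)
Your upper bounds are fine. With $m=8$, Theorem~\ref{thm:bound} gives $4\lceil n/4\rceil$, i.e.\ $n$, $n+2$, $n+1$ for $n\equiv 0,2,3\pmod 4$. For $n=4k+1$ no shift is needed: continue the pattern $(0,4t),(4,4t),(2,4t+2),(6,4t+2)$ through column $4k$, so that columns $4k$ and $0$ both use rows $\{0,4\}$. This gives $n+1$ vertices and is the paper's $S_0$.

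\textbf{The gap is the lower bound.} The bound $\gamma_2^d(C_8\Box C_n)\ge n$ is the substance of the theorem and the source of equality for $n\equiv 0\pmod 4$. You leave it as a search for an unspecified local inequality, and the concrete candidate you name is false. Take $n$ a multiple of $5$, $n\ge 10$. Put rows $\{0,3,6\}$ in every column $j\equiv 0\pmod 5$, rows $\{1,2,4,5,7\}$ in every column $j\equiv 3\pmod 5$, and nothing elsewhere. This is a 2DD-set:
\begin{itemize}
\item Non-$S$ vertices of columns $\equiv 0,3$ have a neighbour in their own column.
\item Columns $\equiv 4$ are dominated by adjacency from columns $\equiv 3$ and $\equiv 0$ together.
\item In a column $j\equiv 1$: rows $0,3,6$ are adjacent to column $j-1$; rows $1,2,4,5$ have one $S$-vertex at distance $2$ in column $j-1$ and also $(r,j+2)\in S$; row $7$ has $(6,j-1)$ and $(0,j-1)$.
\item In a column $j\equiv 2$: rows $1,2,4,5,7$ are adjacent to column $j+1$; rows $0,3,6$ have $(r,j-2)$ and $(r\pm1,j+1)$.
\end{itemize}
Its column counts are $3,0,0,5,0,\dots$, so the four columns $j\equiv 4,0,1,2$ contain only $3$ vertices. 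This refutes $s_{j-1}+s_j+s_{j+1}+s_{j+2}\ge 4$. Also, two half-charges meeting at a vertex (as for an antipodal pair) give exactly the required charge; that is not waste.

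\textbf{The missing idea.} No geometric waste analysis is needed. Your column-grouped charge count already yields $4x_j+2(x_{j-1}+x_{j+1})+\frac12(x_{j-2}+x_{j+2})\ge 8$ for every $j$, which is Lemma~\ref{lm:sequence}. Integrality of the column counts $x_j$ does the rest. If no column is empty, $|S|\ge n$ trivially. Otherwise the paper cuts the cyclic sequence into blocks $X_i$, each consisting of positive entries followed by $q_i\le 3$ zeros (Lemma~\ref{lm:q}), and sets $V_i=\sum_{\ell\in X_i}(x_\ell-1)$. Applying the inequality at the zero columns shows:
\begin{itemize}
\item $q_i=3$ forces $V_i\ge 4$;
\item a block ending in $a,1,0,0$ needs $a\ge 4$;
\item every block with $V_i<0$ has $V_i=-1$, and is compensated either by $V_{i+1}\ge 1$, or by $V_{i+2}\ge 1$ when $X_{i+1}=(3,0,0)$ with $V_{i+1}=0$. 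This matching is injective.
\end{itemize}
Hence $\sum_i V_i=|S|-n\ge 0$. The compensation is non-local: blocks have variable length, and a deficit may be paid two blocks later. Any fixed-window inequality you hope for would have to encode this, and your proposal does not supply a version of it.
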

	
\section{Preliminaries}
		We denote $[k]=\{0,\ldots, k-1\}$ and $V(C_k)=[k]$, where two vertices $i,j\in V(C_k)$ are adjacent if and only if $i-j\equiv \pm1 \pmod k$. 
		
		In this paper, we only consider the graph $\Gamma=C_m\Box C_n$ with $V(\Gamma)=[m]\times [n]$. 
		
		Let $S$ be a 2DD-set of $\Gamma$. 		
		Partition the vertex set of $C_m \Box C_n$ into $n$ columns 
		$$T_j = \{ (i,j)\mid i\in [m] \},\qquad j=0,1,\ldots, n-1.$$
		Let $x_j = |S \cap T_j|$, we call 
		$x_0,x_1,\ldots, x_{n-1}$
		the {\em sequence} of the 2DD-set $S$. 
		If there exists some $x_\ell=0$, by symmetry, we can always let $x_0>0$ and $x_{n-1}=0$. 
        In this case, such a sequence is called {\em proper}. 
		
		For a proper sequence $x_0,x_1,\ldots, x_{n-1}$ of $S$, partition it into blocks $X_0,\ldots,X_{e-1}$ with $$X_i=(x_{k_i+1},\ldots,x_{k_i+p_i},x_{k_i+p_i+1},\ldots, x_{k_i+p_i+q_i}),$$ 
		satisfying  
		$$x_{k_i+1},\ldots,x_{k_i+p_i}>0,\qquad x_{k_i+p_i+1}=\cdots=x_{k_i+p_i+q_i}=0.$$
		We call $X_0,\ldots, X_{e-1}$ the {\em block sequence} of $S$. 
		For fixed $a,b\in \mathbb N$, with $v_\ell:=a x_\ell -b$, define 
		$$V_i=v_{k_i+1}+v_{k_i+2}+\cdots+v_{k_i+p_i+q_i-1}+v_{k_i+p_i+q_i},$$
		the sequence $V_0,\ldots, V_{e-1}$ is called the {\em $(a,b)-$block sequence} of $S$. 		
		For simplicity, we extend the above notations for all $\ell\in \mathbb Z$ with
		$$\begin{aligned}
			 & T_\ell:=T_j, &  & x_\ell:=x_j, & &v_\ell:=v_j,&  & \text{if $\ell \equiv j \pmod n$,}  \\
			 & X_\ell:=X_i, &  & V_\ell:=V_i, & && & \text{if $\ell \equiv j \pmod e$.}
		\end{aligned}$$

		We define the weight function
		\begin{equation}\label{eq:weight}
			w:S\times V(\Gamma)\to \mathbb R,\qquad w(x,y)=\left\{
			\begin{aligned}
				& 1           &  & x=y,                             \\
				& 1           &  & y\notin S \text{ and } d(x,y)=1, \\
				& \frac{1}{2} &  & y\notin S \text{ and } d(x,y)=2, \\
				& 0           &  & \text{otherwise}.
			\end{aligned}	
			\right.
		\end{equation}

		\begin{lemma}\label{lm:sequence}
			Let $\Gamma=C_m\Box C_n$ $(m\geq 3,n\geq 5)$, and $S$ is a 2DD-set of $\Gamma$ with sequence $x_0,\ldots, x_{n-1}$. Then
			\begin{equation}\label{eq:basic}
				x_{j-2}+4x_{j-1}+8x_j+4x_{j+1}+x_{j+2} \geq 2m,\qquad j=0,1,\ldots, n-1. 
			\end{equation}
%			Moreover, if $x_j=0$, then 
%			\begin{equation}
%				x_{j-2}+4x_{j-1}+4x_{j+1}+x_{j+2} \geq 2m.
%			\end{equation}
		\end{lemma}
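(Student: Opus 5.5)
The plan is a double-counting argument with the weight function $w$ of \eqref{eq:weight}: fix a column $T_j$ and bound the quantity
$$W_j:=\sum_{y\in T_j}\sum_{x\in S} w(x,y)$$
from below via the vertices $y$ and from above via the vertices $x$. Comparing the two bounds should give exactly \eqref{eq:basic}.

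\emph{Lower bound.} I will show that every vertex $y\in V(\Gamma)$ satisfies $\sum_{x\in S}w(x,y)\ge 1$, using that $S$ is a 2DD-set. There are three cases.
\begin{itemize}
\item If $y\in S$, then $w(y,y)=1$.
\item If $y\notin S$ and $y$ has a neighbour $x\in S$, then $w(x,y)=1$.
\item Otherwise $|\Gamma_2(y)\cap S|\ge 2$, and these two vertices contribute $\tfrac12+\tfrac12=1$.
\end{itemize}
Since all weights are nonnegative, summing over the $m$ vertices of $T_j$ gives $W_j\ge m$.

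\emph{Upper bound.} I will bound, for a fixed $x=(i,j+d)\in S$, the total weight $\sum_{y\in T_j}w(x,y)$ according to the column offset $d$. The hypothesis $n\ge 5$ makes the columns $T_{j-2},\dots,T_{j+2}$ pairwise distinct, and it guarantees that the distance from $x$ to any vertex of $T_j$ really involves horizontal offset $|d|$.
\begin{itemize}
\item For $d=0$: $x$ contributes $1$ to itself, at most $1$ to each of $(i\pm1,j)$, and at most $\tfrac12$ to each of $(i\pm2,j)$. This totals at most $4$.
\item For $d=\pm1$: $x$ contributes at most $1$ to its neighbour $(i,j)$ and at most $\tfrac12$ to each of the two vertices $(i\pm1,j)$ at distance $2$. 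This totals at most $2$.
\item For $d=\pm2$: only $(i,j)$ is within distance $2$, so the total is at most $\tfrac12$.
\item For $|d|\ge3$: the contribution is $0$.
\end{itemize}

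The one point needing care is small $m$. For $m=3$ or $m=4$, the vertices $(i\pm1,j)$ and $(i\pm2,j)$ may coincide, or be at distance $1$ rather than $2$. In every such case the set of receiving vertices only shrinks, and each receiving vertex still gets at most the weight counted above. So the listed totals remain valid upper bounds, and the argument needs only $m\ge3$.

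Summing over $x\in S$ column by column then gives
$$m\le W_j\le 4x_j+2(x_{j-1}+x_{j+1})+\tfrac12(x_{j-2}+x_{j+2}).$$
Multiplying by $2$ yields \eqref{eq:basic}. I expect no real obstacle; the only delicate step is the bookkeeping of coincident or shorter-distance vertices when $m\le4$, and of distinct columns, which is exactly what $n\ge5$ secures.
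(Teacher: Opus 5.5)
Your proposal is correct and follows the paper's proof: the paper uses the same double count of the weight $w$ over the column $T_j$, with per-vertex bounds $4$, $2$ and $\tfrac12$ for vertices of $S$ in columns at offset $0$, $\pm1$ and $\pm2$. Your explicit treatment of coincident vertices when $m\in\{3,4\}$, and your summing over all of $S$ (rather than over the paper's set $\Delta$, which contains a typo), only make more careful what the paper states informally.
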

		\begin{proof}
			Let $S_\ell=S\cap T_\ell$. 
			Note that each vertex in $S_j$ has two vertices at distance 1 in $T_j$, and at most two vertices at distance 2 in $T_j$. 
			Each vertex in $S_{j-2}\cup S_{j+2}$ has one vertex at distance 2 in $T_j$, and each vertex in $S_{j-1}\cup S_{j+1}$ has one vertex at distance 1 and two vertices at distance 2 in $T_j$. 
			Consider the weight function \eqref{eq:weight}. We have
			$$
			\sum_{y\in T_j} w(x,y)\leq \left\{
			\begin{aligned}
				& 4,           &  & x\in S_j,                 \\
				& 2,           &  & x\in S_{j-1}\cup S_{j+1}, \\
				& \frac{1}{2}, &  & x\in S_{j-2}\cup S_{j+2}.
			\end{aligned}
			\right.
			$$
			Let $\Delta=S_{j-2}\cup S_{j-1}\cup S_j\cup S_{j+1}\cup S_j$. 
			Since $S$ is a 2DD-set, for any $y\in T_j$, we have $\Gamma(y)\cup \Gamma_2(y)\subseteq \Delta$ and 
			\begin{equation}\label{eq:xy}
				\sum_{x\in\Delta}w(x,y)\geq 1.
			\end{equation}
			Sum \eqref{eq:xy} for $y\in T_j$, we obtain
			$$
			\begin{aligned}
				m & \leq \sum_{y\in T_j}\left(\sum_{x\in \Delta} w(x,y)\right)               \\
				  & = \sum_{x\in\Delta}\left( \sum_{y\in T_j} w(x,y)\right)                  \\
				  & \leq\frac{1}{2}x_{j-2} + 2x_{j-1} + 4x_j + 2x_{j+1}+ \frac{1}{2}x_{j+2},
			\end{aligned}
			$$
			This shows the lemma. 
		\end{proof}
		
		\begin{lemma}\label{lm:q}
			Let $\Gamma=C_m\Box C_n$ $(m\geq 3,n\geq 5)$, and $S$ is a 2DD-set of $\Gamma$ with block sequence $X_0,\ldots, X_{e-1}$. Then
			$$q_i\leq 3,\qquad i=0,\ldots, e-1,$$
			where $q_i$ is the number of zeros in $X_i$. 
		\end{lemma}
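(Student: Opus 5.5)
We argue by contradiction: suppose some block $X_i$ has $q_i\ge 4$ zeros. Then there are four consecutive indices $j,j+1,j+2,j+3$ with $x_j=x_{j+1}=x_{j+2}=x_{j+3}=0$. Since the block is preceded by positive entries, a block with $q_i\ge 4$ zeros requires $n\geq 5$, which is the standing hypothesis; in fact it also requires that $S$ is nonempty, which holds because a proper sequence has $x_0>0$. The plan is to exhibit a vertex outside $S$ that is neither dominated nor 2-dominated at distance $2$.

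\textbf{Choosing the vertex.} We look at a vertex $y=(r,j+1)$ in the column $T_{j+1}$. This column is the second of the four empty columns.

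\textbf{The vertex is not dominated.} Since $x_{j+1}=0$, we have $y\notin S$. The neighbours of $y$ lie in $T_j\cup T_{j+1}\cup T_{j+2}$, all of which are empty, so $|\Gamma(y)\cap S|=0$.

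\textbf{At most one vertex at distance 2.} The vertices at distance $2$ from $y$ lie in columns $T_{j-1},\dots,T_{j+3}$. Among these, only $T_{j-1}$ can contain vertices of $S$. The unique vertex of $T_{j-1}$ at distance $2$ from $y$ is $(r,j-1)$: any other vertex $(r',j-1)$ needs at least one vertical step besides the two horizontal steps. Here we use $n\ge 5$, so that $T_{j-1}$ is distinct from $T_{j+3}$ and from the other empty columns, and column distances are computed correctly on $C_n$.

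Hence $|\Gamma_2(y)\cap S|\le 1$. This contradicts the fact that $S$ is a 2DD-set, so $q_i\leq 3$.

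\textbf{Wrap-around.} The only subtle point is when the zero run wraps around the index $0$ of the cyclic sequence. This does not occur for a proper sequence, because blocks end at $x_{n-1}=0$ and start at $x_0>0$, so each run of zeros is contained in a single block.

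As an alternative to the direct argument, one can appeal to Lemma~\ref{lm:sequence} at $j+1$, which gives $x_{j-1}\ge 2m$. This is impossible, since $x_{j-1}\le m$, and it yields the same contradiction. I regard this as the cleanest route. The main care needed is the index bookkeeping, namely checking that $n\ge5$ keeps the columns $j-1,\dots,j+3$ distinct.
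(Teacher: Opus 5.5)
Your proof is correct. Its main argument differs from the paper's, and your ``alternative'' is exactly what the paper does.

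The paper applies \eqref{eq:basic} at an interior zero of the run. With zeros at $\ell-2,\dots,\ell+1$ it takes $j=\ell$ and gets $x_{\ell+2}\ge 2m>m$. You take the second zero instead and get $x_{j-1}\ge 2m$, which is the mirror image.

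Your main argument is more elementary. Instead of the averaging inequality, you exhibit an explicit vertex $y=(r,j+1)$ that violates the 2DD condition. It has no neighbour in $S$, and its only possible $S$-vertex at distance $2$ is $(r,j-1)$. This route is self-contained, since it does not use Lemma~\ref{lm:sequence}. It also gives a sharper, pointwise conclusion: every vertex of the second and third empty columns fails, however many vertices $T_{j-1}$ and $T_{j+4}$ contain. The counting route reaches a contradiction only through the crude bound $x_{j-1}\le m$. In exchange, the paper's route costs one line once \eqref{eq:basic} is available. It also keeps the lemma inside the same inequality framework that drives the later case analyses for $C_3\Box C_n$ and $C_8\Box C_n$.

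Your remarks on wrap-around and on the use of $n\ge 5$ are harmless but not needed. The argument works verbatim with cyclic column indices. If $T_{j-1}$ coincided with one of the empty columns, the bound $|\Gamma_2(y)\cap S|\le 1$ would only improve.
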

		\begin{proof}
			If $q_i\geq 4$, then we have four consecutive zeros in the block $X_i$.  Assume $$(x_{\ell-2},x_{\ell-2},x_{\ell},x_{\ell+1})=(0,0,0,0). $$
			With $j=\ell$ in \eqref{eq:basic}, we have $x_{j+2}\geq 2m$, contradict to $x_j\leq m$. Hence $q_i\leq 3$. 
		\end{proof}
		
		\begin{lemma}\label{lm:reduce}
			Let $\Gamma=C_m\Box C_n$ $(m,n\geq 6)$, then
			$$
				\max \{\gamma_2^d(C_{m-1}\Box C_n),~\gamma_2^d(C_m\Box C_{n-1})\}\leq \gamma_2^d(C_m\Box C_n). 
			$$
		\end{lemma}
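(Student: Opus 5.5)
The plan is a projection argument. I will prove $\gamma_2^d(C_{m-1}\Box C_n)\le \gamma_2^d(C_m\Box C_n)$ by collapsing two adjacent rows of a minimum 2DD-set into one. The inequality for $C_m\Box C_{n-1}$ then follows by the same argument with rows and columns exchanged, using $n\ge 6$.

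\textbf{Setup.} Define $\varphi:[m]\to[m-1]$ by $\varphi(i)=i$ for $i\le m-2$ and $\varphi(m-1)=m-2$, and put $\Phi(i,j)=(\varphi(i),j)$. Every edge of $C_m$ is sent either to an edge of $C_{m-1}$ or to a single vertex (the edge $\{m-2,m-1\}$ collapses). Also $\{m-1,0\}$ goes to $\{m-2,0\}$, which is an edge of $C_{m-1}$. Hence $\Phi$ is non-expansive:
$$d(\Phi(u),\Phi(v))\le d(u,v)\quad\text{for all } u,v.$$
Let $S$ be a minimum 2DD-set of $C_m\Box C_n$ and set $S'=\Phi(S)$, so that $|S'|\le|S|$. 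It remains to show that $S'$ is a 2DD-set of $C_{m-1}\Box C_n$.

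\textbf{Verification.} Take $y'\notin S'$ and choose its preimage $y$ with row index at most $m-2$. Then $y\notin S$, since otherwise $y'\in S'$.
\begin{itemize}
\item If $y$ has a neighbour $x\in S$, then $d(\Phi(x),y')\le 1$. Since $y'\notin S'$, we have $\Phi(x)\ne y'$, so $\Phi(x)$ is a neighbour of $y'$ in $S'$.
\item Otherwise $y$ has two distinct vertices $x_1,x_2\in S$ at distance $2$. Their images lie at distance at most $2$ from $y'$ and differ from $y'$. If either image is adjacent to $y'$, we are done. So both images lie at distance exactly $2$, and it suffices to show $\Phi(x_1)\ne\Phi(x_2)$.
\end{itemize}

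\textbf{Main obstacle.} The key step is ruling out $\Phi(x_1)=\Phi(x_2)$, i.e.\ that $\{x_1,x_2\}=\{(m-2,k),(m-1,k)\}$ for some $k$. Write $y=(i,j)$. Both vertices are at distance $2$ from $y$, so
$$d_{C_m}(i,m-2)+d_{C_n}(j,k)=d_{C_m}(i,m-1)+d_{C_n}(j,k)=2,$$
and therefore $d_{C_m}(i,m-2)=d_{C_m}(i,m-1)\le 2$. In a cycle, two adjacent vertices are equidistant from $i$ only when $m$ is odd and $i$ is the vertex opposite the edge $\{m-2,m-1\}$. That common distance is $(m-1)/2\ge 5/2>2$ because $m\ge 6$, a contradiction. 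This is exactly where the hypothesis $m\ge 6$ enters; for smaller $m$ the images could coincide and the argument would fail.

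So $S'$ is a 2DD-set of size at most $\gamma_2^d(C_m\Box C_n)$, which gives the claimed inequality for $C_{m-1}\Box C_n$.
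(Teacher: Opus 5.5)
Your proposal is correct and follows the same route as the paper: fold two adjacent rows (the paper folds the columns $T_{n-2},T_{n-1}$), use that the folding map is non-expansive, and check that the image of a minimum 2DD-set is again a 2DD-set. The only difference is how you exclude the collision $\Phi(x_1)=\Phi(x_2)$. The paper notes that two vertices at distance $2$ from $y$ are non-adjacent when $m,n\ge 6$, and that folding shortens distances by at most $1$. You instead identify the unique colliding pair and use the equidistance property of adjacent vertices on a cycle, which is equally valid and needs the hypothesis only on the folded dimension.
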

		\begin{proof}
			Let $\Gamma=C_m\Box C_n$ and $\Gamma'=C_{m}\Box C_{n-1}$. 
			Consider the following map 
			$$
			f:V(\Gamma)\to V(\Gamma'),\qquad f(v)=\left\{
			\begin{aligned}
				 & (x,y),   &  & \text{if $y\leq n-2$,} \\
				 & (x,y-1), &  & \text{if $y=n-1$},
			\end{aligned}\right.
			$$
			where $v=(x,y)$ is a vertex of $\Gamma$. 
			In fact, the map $f$ just folds the columns $T_{n-1}$ and $T_{n-2}$ of $\Gamma$ together. 
			Consider any two vertices $u,v\in V(\Gamma)$. 
			If $u,v$ are in the same column, then $f$ keeps their distance. 
			Otherwise, the distance is reduced by at most 1. 
			We have
			\begin{equation}\label{eq:distance}
				d_\Gamma (u,v)\geq d_{\Gamma'}(f(u),f(v))\geq d_\Gamma(u,v)-1. 
			\end{equation} 
			
			Let $S$ be a 2DD-set of $\Gamma$ with $|S|=\gamma_2^d(C_m\Box C_n)$. Consider the set $f(S)=\{f(u)\mid u\in S\}$. We see that $|f(S)|\leq |S|$. 
			Let $w=f(u)$ be any vertex in $V(\Gamma')\setminus f(S)$. 
			Note that $u\notin S$. 
			
			If $|\Gamma_1(u)\cap S|\geq 1$, take $v\in \Gamma_1(u)\cap S$.  
			Note that $w\neq f(v)\in f(S)$. By \eqref{eq:distance}, we have $d_\Gamma(f(v),w)= 1$.
			Then $f(v)\in \Gamma_1'(w)\cap f(S)$, which implies $|\Gamma_1'(w)\cap f(S)|\geq 1$. 		
				
			In the case $|\Gamma_2(u)\cap S|\geq 2$, take $v_1,v_2\in \Gamma_2(u)\cap S$, then $1\leq d_{\Gamma'}(f(v_i),w)\leq 2$ by \eqref{eq:distance}. 
			If $f(v_i)\in \Gamma_1'(w)\cap f(S)$ for some $i$, then $|\Gamma_1'(w)\cap f(S)|\geq 1$. 
			Otherwise,  we have $f(v_1),f(v_2)\in \Gamma_2'(w)\cap f(S)$. 
			As $m,n\geq 6$, we see that $d_\Gamma (v_1, v_2)\geq 2$, which implies $d_{\Gamma'}(f(v_1),f(v_2))\geq 1$ by \eqref{eq:distance}. 
			It follows that $|\Gamma_2'(w)\cap f(S)|\geq 2$. 
			
			Hence $f(S)$ is a 2DD-set of $\Gamma'$. 
			This shows 
			$$
			\gamma_2^d(C_{m}\Box C_{n-1})\leq \gamma_2^d(C_m\Box C_n). 
			$$
			
			By symmetry, the result for $C_{m-1}\Box C_{n}$ follows.

		\end{proof}

\section{Proof of Theorem \ref{thm:bound}}
\begin{proof}[Proof of Theorem \ref{thm:bound}]
	Let $\Gamma=C_m\Box C_n$ and $S$ be a 2DD-set of $\Gamma$. 
	Consider the weight function \eqref{eq:weight}. 
	For any vertex $y\in V(\Gamma)\setminus S$, we have $|\Gamma (y)\cap S|\geq 1$, or $|\Gamma_2(y)\cap S|\geq 2$, that is
	$$\sum_{x\in S}w(x,y)\geq 1.$$
	For any vertex $x\in S$, as $m,n\geq 5$, we have $|\Gamma(x)|=4$ and $|\Gamma_2(x)|=8$, which implies
	$$\sum_{y\in V(\Gamma)\setminus S}w(x,y)\leq 4+8\cdot\frac{1}{2}=8.$$
	We have  
	$$
	\begin{aligned}
		     & |V(\Gamma)|-|S|                                                      \\
		\leq & \sum_{y\in  V(\Gamma)\setminus S}\left( \sum_{x\in S}w(x,y) \right) \\
		=    & \sum_{x\in S}\left(\sum_{y\in  V(\Gamma)\setminus S} w(x,y) \right) \\
		\leq & 8|S|.
	\end{aligned}$$
	It follows that $|S|\geq |V(\Gamma)|/9$ for any 2DD-set $S$, that is 
	\begin{equation}\label{eq:low}
		\gamma_2^d(C_m\Box C_n)\geq \frac{mn}{9}.
	\end{equation}
		
	Let $m'=4\lceil \frac{m}{4}\rceil$, $n'=4\lceil \frac{n}{4}\rceil$ and 
	$$S'=\{
		(x',y')\in [m']\times [n']\mid  \text{$x',y'$ are even, and }x'+y'\equiv 0\pmod 4
	\}.$$
	Then $S'$ is a 2DD-set of $\Gamma'=C_{m'}\Box C_{n'}$, and we have
	$$
	\gamma_2^d (C_{m'}\Box C_{n'})\leq |S'|=2\lceil \frac{m}{4}\rceil \cdot \lceil \frac{n}{4}\rceil. 
	$$	 
	If $m,n\geq 6$, by Lemma \ref{lm:reduce}, we have
	\begin{equation}\label{eq:up}
		\gamma_2^d (C_m\Box C_n)\leq 2\lceil \frac{m}{4}\rceil \cdot \lceil \frac{n}{4}\rceil. 
	\end{equation}
	
	Otherwise, we may assume $m=5$ by symmetry. Then $2\left\lceil \frac{m}{4}\right\rceil \cdot \left\lceil \frac{n}{4}\right\rceil\geq n$. 
	We construct a 2DD-set $S$ of $\Gamma=C_5\Box C_n$, with  
	$$S=\{(i,j)\in [5]\times [n]\mid i\equiv 2j \pmod 5\},$$
	and $|S|=n$, which implies \eqref{eq:up}. 
	
	The result follows from \eqref{eq:low} and \eqref{eq:up}.

\end{proof}

\section{Proof of Theorem \ref{thm:3}}
\begin{proof}[Proof of Theorem \ref{thm:3}]
	
	\begin{figure}%[htpb]
		\centering
		\includegraphics[width=0.8\textwidth]{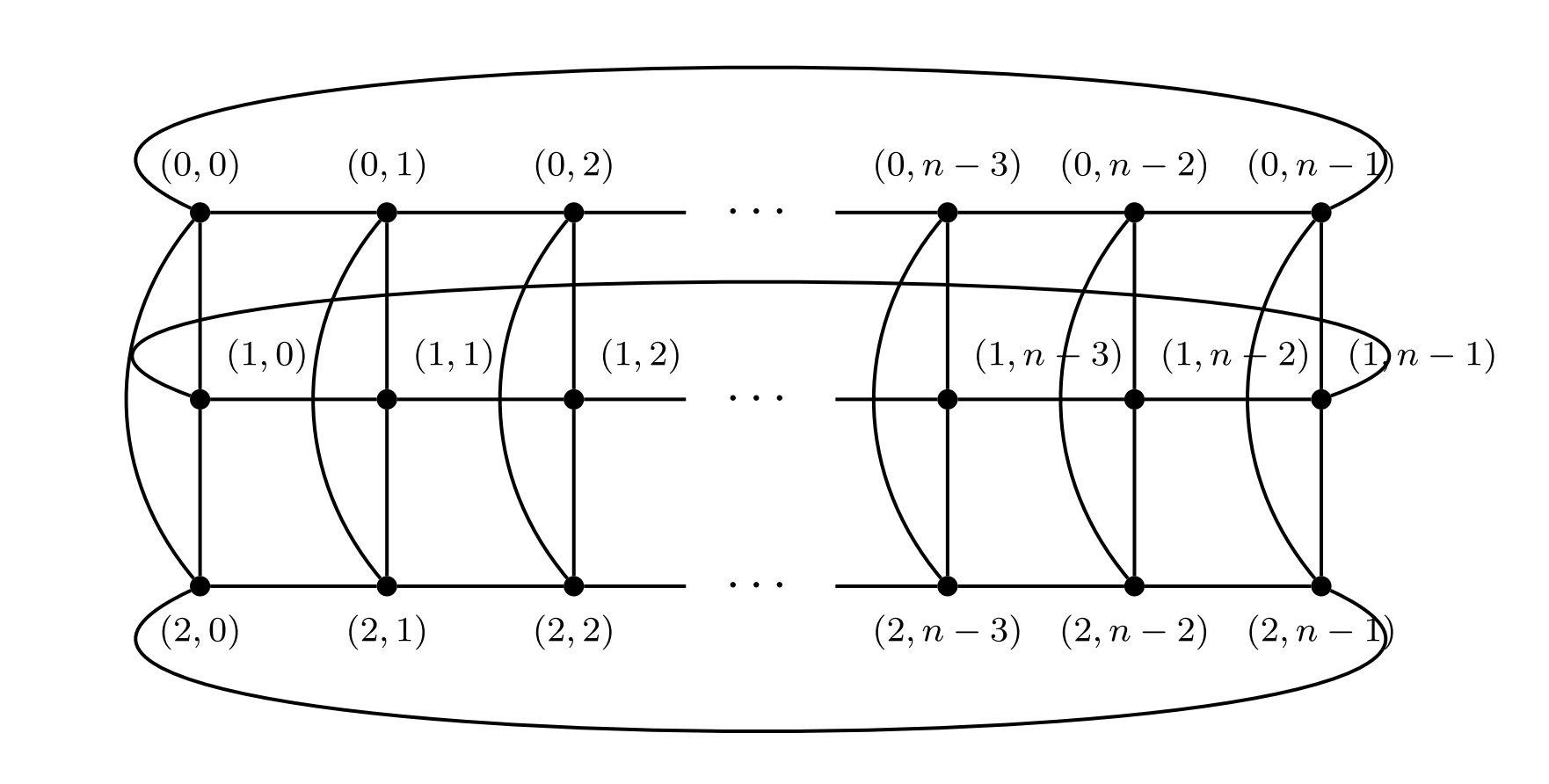}
	\caption{The torus grid graph $C_3 \Box C_n$.}
	\label{fig:C3_Cn}
\end{figure}
    For $n=3,4$, the result follows from direct verification. Assume $n \ge 5$.

	Let $\Gamma=C_3 \Box C_n$. 
	With $A=\{(0, 2k)\mid k\in \mathbb N\}$, we construct a 2DD-set $S_0$ of $\Gamma$, where $S_0=A\cap V(\Gamma)$ and $|S_0| = \lceil \frac{n}{2} \rceil$. This shows 
	\begin{equation}\label{eq:3up}
		\gamma_2^d(C_3 \Box C_n)\leq  \left\lceil \frac{n}{2} \right\rceil .
	\end{equation}	
	
    Let $S$ be a 2DD-set with sequence $x_0,\ldots, x_{n-1}$. 
	If all $x_{\ell}\geq 1$, then 
	$$
	|S|=\sum_{\ell=0}^{n-1} x_{\ell}\geq n.
	$$
	So we may assume the sequence is proper, with block sequence $X_0,\ldots, X_{e-1}$.  
    Let $V_0,\ldots, V_{e-1}$ be the $(2,1)$-block sequence  with $v_\ell=2x_\ell -1$. 
	Consider a block $X_i$, with $q_i$ zeros. Then $q_i\leq 3$ by Lemma \ref{lm:q}. 
		
	{\bf Case 1.} If $q_i=3$, let $x_\ell$ be the middle zero of $X_i$, that is $x_{\ell-1}=x_\ell=x_{\ell+1}=0$, by $\eqref{eq:basic}$ with $j=\ell$, 
	we have $x_{\ell-2}+x_{\ell+2}\geq 6$. 
	It follows that $x_{\ell-2}=3$ and
	\begin{equation}\label{case31}
		V_i\geq (2x_{\ell-2}-1)-q_i = 2.
	\end{equation}
	
	{\bf Case 2.} If $q_i=1$, we have $V_i \geq 1-q_i\ge 0$.   
	If some $x_\ell\geq 2$ in $X_i$, we  have
	\begin{equation}
		V_i \geq (2x_\ell-1) -q_i\geq 2. \label{case32}
	\end{equation}

	{\bf Case 3.} Consider the case $q_i=2$. If some $x_\ell\geq 2$ in $X_i$, then we have 
	\begin{equation}\label{case33}
		V_i\geq (2x_\ell-1)-q_i\geq 1.
	\end{equation}
	Assume all $x_\ell \leq 1$ in $X_i$. 
	Then $V_i < 0$ if and only if $X_i=(x_{\ell-1},x_{\ell},x_{\ell+1})=(1,0,0)$ and $V_i=-1$. 
	Substituting  $(x_{\ell-1},x_{\ell},x_{\ell+1})=(1,0,0)$ into $\eqref{eq:basic}$ with $j=\ell$, we have  
	$$x_{\ell-2} + x_{\ell+2} \ge 2.$$ 
	Note that $x_{\ell-2}$ is the last zero element of $X_{i-1}$, we have $x_{\ell+2}\geq 2$. 
	By \eqref{case31}, \eqref{case32} and \eqref{case33}, we have $V_{i+1}\geq 1$. 
	
	We see that if $V_i<0$, then $V_i+V_{i+1}\geq 0$. 
    Thus, 
	$$0\leq \sum_{i=0}^e V_i=\sum_{\ell=0}^{n-1} v_\ell=\sum_{\ell=0}^{n-1} (2x_\ell-1)=2|S| - n,$$
	that is 
	\begin{equation}\label{eq:3low}
		\gamma_2^d(C_3 \Box C_n) \geq \lceil n/2 \rceil.
	\end{equation}
	
	The result follows from \eqref{eq:3up} and \eqref{eq:3low}. 
\end{proof}

\section{Proof of Theorem \ref{thm:4}}
	\begin{figure}%[htpb]
		\centering
		\includegraphics[width=0.8\textwidth]{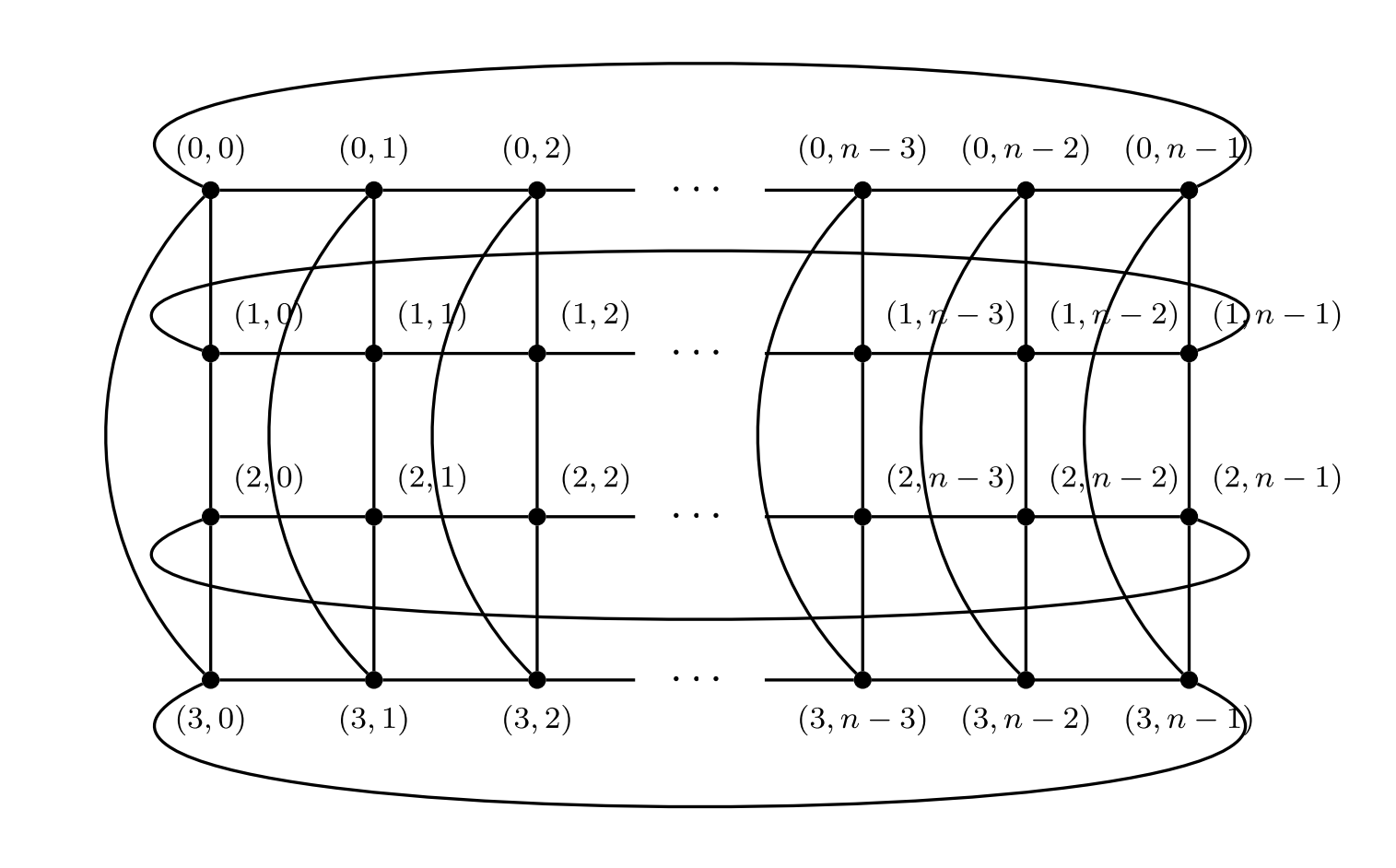}
		\caption{The torus grid graph $C_4 \Box C_n$.}
		\label{fig:C4_Cn}
	\end{figure}
	\begin{proof}[Proof of Theorem \ref{thm:4}]
		For $n=4$, direct verification shows the result. Assume $n\geq 5$. 
		
		Let $\Gamma=C_4\Box C_n$. 
		With $A= \{(0,4k),(2,4k+2)\mid k\in \mathbb N\}$ and $A_0=A\cap V(\Gamma)$,  
		we construct a 2DD-set $S_0$ of $\Gamma$, where 
		$$
		S_0=\left\{
			\begin{aligned}
				 & A_0 \cup \{(2,0)\}, &  & n \equiv 2 \pmod 4, \\
				 & A_0,                &  & \text{otherwise,}
			\end{aligned}
		\right.
		$$
		and 
		$$
		|S_0| =\left\{
		\begin{aligned}
			 & \frac{n}{2} + 1,                      &  & n \equiv 2 \pmod 4, \\
			 & \left\lceil \frac{n}{2} \right\rceil, &  & \text{otherwise}.
		\end{aligned}\right.
		$$
		This shows that
		\begin{equation}\label{eq:4up}
			\gamma_2^d(C_4\Box C_n)\leq |S_0|.
		\end{equation}

	Let $S$ be a 2DD-set of $\Gamma$ with sequence $x_0,\ldots, x_{n-1}$. 
	If $(x_\ell,x_{\ell+1},x_{\ell+2},x_{\ell+3})=0$, then  with $j=\ell+1$ in \eqref{eq:basic}, we have  $x_{\ell-1}\geq 8$, contradict to $x_{\ell-1} \leq 4$. Hence we have 
	\begin{equation*}
		\sum_{t=0}^3 x_{\ell+t}\geq 1, \qquad \ell=0,1,\ldots, n-1.  
	\end{equation*}
	Assume equality holds for some $\ell=k$. 
	Then there exists only one vertex $v$ of $S$ in the four columns $T_k,\ldots, T_{k+3}$. 
	By symmetry, we may assume $k=0$ and $u=(0,y)$ with $y\in\{0,1\}$. Consider the vertex $v=(2,2)$. We see that $d(u,v)\geq 3$ and $\Gamma_2(v)\cap S\subset \{(2,4)\}$, contradict to $|\Gamma_2(v)\cap S|\geq 2$. Hence we have
	\begin{equation}\label{eq:41}
		\sum_{t=0}^3 x_{\ell+t}\geq 2, \qquad \ell=0,1,\ldots, n-1, 
	\end{equation}
	and  
	\begin{equation}\label{eq:42}
		4|S|=4\sum_{\ell=0}^{n-1} x_\ell=\sum_{\ell=0}^{n-1}\sum_{t=0}^3 x_{\ell+t}\geq 2n.
	\end{equation}
	that is
	\begin{equation}\label{eq:4low}
		|S|\geq \left\lceil \frac{n}{2}\right\rceil.
	\end{equation}
	
	\begin{claim}
		If $n \equiv 2 \pmod 4$, then $|S| \ge n/2 + 1$.
	\end{claim}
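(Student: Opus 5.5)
Suppose, for a contradiction, that $n\equiv 2\pmod 4$ and $|S|=n/2$. Then \eqref{eq:42} holds with equality, so every inequality in \eqref{eq:41} is an equality:
$$x_\ell+x_{\ell+1}+x_{\ell+2}+x_{\ell+3}=2,\qquad \ell=0,1,\ldots,n-1.$$
The plan is to use this rigidity to pin down the column structure of $S$, and then to obtain a parity contradiction from the row positions.

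\emph{Step 1: the sequence alternates.} Subtracting the equalities for $\ell$ and $\ell+1$ gives $x_{\ell+4}=x_\ell$, so the sequence (indices mod $n$) has period $4$. It also has period $n$, hence period $\gcd(4,n)=2$, since $n\equiv 2\pmod 4$. So $x_{\ell+2}=x_\ell$ for all $\ell$. The window equality then reads $2(x_0+x_1)=2$, so $x_0+x_1=1$. After a cyclic shift of columns we may assume $x_j=1$ for even $j$ and $x_j=0$ for odd $j$. For each even $j$, let $S\cap T_j=\{(r_j,j)\}$.

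\emph{Step 2: local constraint on an odd column.} Fix an odd $j$, so that all four vertices of $T_j$ lie outside $S$. For $y=(i,j)$, the vertices of $S$ within distance $2$ of $y$ lie only in $T_{j\pm1}$, because columns $T_{j\pm2}$ are empty and $T_{j\pm3}$ is too far. Consequently $y$ is adjacent to $S$ exactly when $i\in\{r_{j-1},r_{j+1}\}$. Otherwise $y$ needs two vertices of $S$ at distance $2$, which forces $i-r_{j-1}\equiv\pm1$ and $i-r_{j+1}\equiv\pm1 \pmod 4$.

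Write $r=r_{j-1}$ and check the possible values of $r_{j+1}$ in $C_4$:
\begin{itemize}
\item If $r_{j+1}=r$, the row $r+2$ fails, since it is at distance $3$ from both vertices.
\item If $r_{j+1}=r\pm1$, the row $r\mp1$ (respectively the row on the other side) fails: it is adjacent to neither vertex and at distance $3$ from one of them.
\item If $r_{j+1}=r+2$, the rows $r$ and $r+2$ are dominated by adjacency, and the rows $r\pm1$ are at distance $2$ from both vertices, so they are dominated as well.
\end{itemize}
Hence $r_{j+1}\equiv r_{j-1}+2\pmod 4$ for every odd $j$.

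\emph{Step 3: parity contradiction.} Going once around the $n/2$ even columns, the row index increases by $2$ at each of the $n/2$ steps. Returning to column $0$ therefore requires $r_0\equiv r_0+2\cdot\frac n2 = r_0+n\pmod 4$, that is $n\equiv 0\pmod 4$. This contradicts $n\equiv 2\pmod 4$, so $|S|\ge n/2+1$.

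The main obstacle is the case check in Step 2. It must be done carefully with distances in $C_4$, where rows $r$ and $r+2$ are at distance $2$, and it uses that no vertex of $S$ in another column can contribute. The reduction in Step 1 is routine.
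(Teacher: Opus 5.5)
Your proof is correct and follows the paper's argument. Equality in every four-column window forces the alternating sequence $1,0,1,0,\ldots$; the empty odd columns then force $r_{j+1}\equiv r_{j-1}+2\pmod 4$, and going once around the cycle gives the contradiction with $n\equiv 2\pmod 4$. The only difference is that the paper skips your three-way case check in Step 2. It looks directly at the single vertex $(r_{j-1}+2,\,j)$: this vertex is at distance $3$ from $(r_{j-1},j-1)$, so its only possible neighbour in $S$ within distance $2$ is $(r_{j+1},j+1)$, which therefore must be adjacent to it.
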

	\begin{proof}[\textbf{Proof of Claim.}] 
		If $|S| = n/2$, then by \eqref{eq:41} and \eqref{eq:42}, we see
		$$\sum_{t=0}^3 x_{\ell+t} = 2,\qquad \ell=0,1,\ldots,n-1.$$
		This implies $x_\ell = x_{\ell+4}$. Since $n \equiv 2 \pmod 4$ and $\gcd(4, n) = 2$, we see $x_\ell = x_{\ell+2}$. Then the sequence $x_\ell$ must be $1, 0, 1, 0, \dots$. 
			
		Assume $x_0=1$. Then the sequence of $S$ is $1,0,1,0,\ldots, 1,0$, where $S$ contains exactly one vertex $u_{2i}=(r_{2i}, 2i)$ in $T_{2i}$, and $0$ vertex in $T_{2i+1}$.  
		Consider $u=(x,y)$ with
		$$
		x\equiv r_{2i}+2 \pmod 4,\qquad y\equiv 2i+1\pmod n.
		$$ 
		We see that $d(u_{2i},u)=3$, and the vertices in $S\setminus \{u_{2i+2}\}$ are at distance at least $3$ from $u$. 
		So we must have $d(u_{2i+2},u)=1$, 
		which means
		$r_{2i+2}\equiv r_{2i}+2\pmod 4$. 
		We see that
		$$r_0\equiv r_{0}+2\cdot \frac{n}{2}\pmod 4,$$
		contradict to $n\equiv 2\pmod 4$. 
		This shows $|S|\geq \frac{n}{2}+1$. 
	\end{proof}
	The result follows from \eqref{eq:4up}, \eqref{eq:4low} and the Claim. 
\end{proof}

\section{Proof of Theorem \ref{thm:8}}

\begin{proof}[Proof of Theorem \ref{thm:8}]
	Consider the graph $\Gamma=C_8\Box C_n$. 
	Let $$A= \{(0,4k),(4,4k),(2,4k+2),(6,4k+2)\mid k\in \mathbb Z\},$$ 
    and $A_0=A\cap V(\Gamma)$. 
	We construct a 2DD-set $S_0$ of $\Gamma$, where
	$$
	S_0=\left\{
	\begin{aligned}
		 & A_0 \cup \{(2,0),(6,0)\}, &  & n \equiv 2 \pmod 4, \\
		 & A_0,                      &  & \text{otherwise,}
	\end{aligned}
	\right.
	$$
	and 
	$$
	|S_0| =\left\{
	\begin{aligned}
		& n+2,                                   &  & n \equiv 2 \pmod 4, \\
		& 2\left\lceil \frac{n}{2} \right\rceil, &  & \text{otherwise}.
	\end{aligned}\right.
	$$
	This shows that 
	\begin{equation}\label{eq:8up}
		\gamma_2^d(C_8\Box C_{n})\leq |S_0|.
	\end{equation}
	
	Let $S$ be a 2DD-set with sequence $x_0,\ldots, x_{n-1}$. 
	If all $x_{\ell}\geq 1$, then 
	$$
	|S|=\sum_{\ell=0}^{n-1} x_{\ell}\geq n.
	$$
	Assume the sequence is proper. 
	Let $X_0,\ldots, X_{e-1}$ be block sequence of $S$, and $V_0,\ldots, V_{e-1}$ be the $(1,1)$-block sequence  with $v_\ell=x_\ell -1$. 
	Consider a block $X_i$, with $q_i$ zeros. Then $q_i\leq 3$ by Lemma 
	\ref{lm:q}. Let $x_{\ell-1}$ be the first zero in $X_i$. 
	
	For any block $X_k$ with $q_k$ zeros, and $x_{s}$ a nonzero term in $X_k$, we have 
	\begin{equation}\label{eq:s}
		V_k\geq (x_{s}-1)-q_k. 
	\end{equation}
	Moreover, 	
	\begin{equation}\label{eq:s+1}
		V_k\geq (x_s-1)+(x_{s+1}-1)-q_k, 
	\end{equation}
    \begin{equation}\label{eq:s-1}
		V_k\geq (x_{s-1}-1)+(x_{s}-1)-q_k.  
	\end{equation}

	{\bf Case 1.}
	If $q_i=3$, then the zeros in $X_i$ are $x_{\ell-1},x_{\ell},x_{\ell+1}$. 
	With $j=\ell$ in \eqref{eq:basic}, we see $x_{\ell-2}+x_{\ell+2}\geq 16$ and $x_{\ell-2}=8$. 
    By \eqref{eq:s}, we have $V_i\geq (x_{\ell-2}-1)-q_i\geq 4$. 
	
	{\bf Case 2.} 
	If $q_i=2$, then the zeros in $X_i$ are $x_{\ell-1},x_\ell$. Substitute $j=\ell,\ell-1$ in \eqref{eq:basic} respectively, we obtain 
	\begin{equation}\label{eq:+1}
		4x_{\ell+1}+x_{\ell+2}  \geq 16-x_{\ell-2},
	\end{equation}
	\begin{equation}\label{eq:-3}
		x_{\ell-3}+x_{\ell+1}  \geq 16-4x_{\ell-2}.
	\end{equation}
	We have $V_i\geq (x_{\ell-2}-1)-2$ by \eqref{eq:s}
    We see that if $V_i< 0$, then $x_{\ell-2}\leq 2$. 
	
	{\bf Case 2a.} Consider the case $x_{\ell-2}=1$. 
	With $x_{\ell+1}\leq 8$, by \eqref{eq:-3}, we have 
	$x_{\ell-3}\geq 12-x_{\ell+1}\geq 4$. Then \eqref{eq:s} implies  $V_i\geq (x_{\ell-3}-1)-2\geq 1$. 
	It follows that the block
	$$
	X_t=(3,1,\ldots,1,0,0)
	$$
	is impossible. 
	
	{\bf Case 2b.} Consider the case $x_{\ell-2}=2$ and $V_i< 0$. 
	Then by \eqref{eq:s} and \eqref{eq:s-1}, we have
	$$
	-1\geq V_i\geq (x_{\ell-2}-1)-2=-1,
	$$
	$$-1\geq V_i\geq (x_{\ell-3}-1)+(x_{\ell-2}-1)-2=x_{\ell-3}-2.
	$$
	It follows that $x_{\ell-3}\leq 1$ and $V_i=-1$. And \eqref{eq:-3} implies $x_{\ell+1}\geq 7$. 
	Then we obtain $V_{i+1}\geq (x_{\ell+1}-1)-3\geq 1$ by \eqref{eq:s}, and  $V_i+V_{i+1}\geq 0$.

    In general, if $V_k\leq 0$, then we have $q_k\leq 2$ and \eqref{eq:s} and \eqref{eq:s+1} implies
	\begin{equation}\label{eq:34}
		x_s\leq 3,\qquad x_s+s_{s+1}\leq 4. 
	\end{equation}

	{\bf Case 2c.} Consider the case $x_{\ell-2}=3$ and $V_i=0$. 
	By \eqref{eq:+1}, we have $4x_{\ell+1}+x_{\ell+2}\geq 13$. 
	If $V_{i+1}\leq 0$, then we have \eqref{eq:34} with $s=\ell+1$. 
    Since the only solution of
	$$\left\{
	\begin{aligned}
		&4x_{\ell+1}+x_{\ell+2}\geq 13\\
		&x_{\ell+1}\leq 3\\
		&x_{\ell+1}+x_{\ell+2}\leq 4
	\end{aligned}\right. 
	$$
	is $(x_{\ell+1},x_{\ell+2})=(3,1)$, we must have
	$$X_{i+1}=(3,1,\ldots,1,0,0)$$
	to make $V_{i+1}\leq 0$, contract to Case 2a. Hence we always have $V_{i+1}\geq 1$. 
	
	{\bf Case 3.}
	Consider the case $q_i=1$ and $V_i<0$.
	The zero in $X_i$ is exactly $x_{\ell-1}$. 
	Note that $x_{\ell-2}\geq 1$, and \eqref{eq:s} and \eqref{eq:s-1} implies
	$$-1\geq V_i\geq x_{\ell-2}-2,$$
	$$-1\geq V_i\geq x_{\ell-3}+x_{\ell-2}-3.$$
	We see that  $V_i=-1$,  and $(x_{\ell-3},x_{\ell-2})=(0,1)$ or $(1,1)$. 
	We have 
	$4x_{\ell}+x_{\ell+1}\geq 11$ with $j=\ell-1$ in \eqref{eq:basic}. 
	If $V_{i+1}\leq 0$, then we have \eqref{eq:34} with $s=\ell$. 
    Since the only solution of
	$$
	\left\{
	\begin{aligned}
		&4x_{\ell}+x_{\ell+1}\geq 11\\
		&x_{\ell}\leq 3\\
		&x_{\ell}+x_{\ell+1}\leq 4
	\end{aligned}\right. 
	$$
	is $(x_{\ell+1},x_{\ell+2})=(3,0)$ or $(3,1)$. 
	Then $V_{i+1}\leq 0$ implies $V_{i+1}=0$, with
	$X_{i+1}=(3,0,0)$ or $(3,1,\ldots,1,0,0)$. 
	The latter one is impossible by Case 2a. 
	For the former one, Case 2c implies $V_{i+2}\geq 1$. 
	We have $V_{i+1}=0$ and $V_i+V_{i+2}\geq 0$.  

    We show that if $V_i<0$ then one of the following holds
    \begin{enumerate}[1)]
        \item $V_i+V_{i+1}\geq 0$ (Case 2b);
        \item $V_{i+1}=0$ and $V_i+V_{i+2}\geq 0$ (Case 3). 
    \end{enumerate}
	It follows that  
	$$
	0\leq \sum_{i=0}^{e-1} V_i=\sum_{\ell=0}^{n-1}x_{\ell}-n,
	$$
	that is
	\begin{equation}\label{eq:8low}
		\gamma_2^d(C_8\Box C_{n})\geq n.
	\end{equation}
	
	The result follows from \eqref{eq:8up} and \eqref{eq:8low}.
	
\end{proof}

\section*{ Data   availability}

No data was used for the research described in the article.
	
\section*{Acknowledgments}

Qiao is supported by Natural Science Foundation of China (No. 12071321). Xia is supported by Key Project in Natural Science Research of Anhui Provincial Department of Education (No. 2023AH050268). Hong is supported by Natural Science Foundation of China (No. 12371338) and Outstanding Youth Scientific Research Projects of Anhui Provincial Department of Education (No. 2022AH030073).

\bibliographystyle{plain}
\bibliography{bib}
\end{document}